\documentclass[reqno,10pt]{elsarticle}

\usepackage{amsmath, amsthm, amssymb, enumerate}
\usepackage{comment}
\usepackage{color}
\usepackage{datetime}
\usepackage{fancyhdr}
\usepackage{soul}
%
%
\usepackage{amsthm, amssymb}
\usepackage{amsmath}
\usepackage[margin=1in]{geometry}
%
%
%
%
%
%
  \textwidth 16truecm \textheight 8.4in\oddsidemargin0.2truecm\evensidemargin0.7truecm\voffset.9truecm
%
 \def\pdot{{\color{purple}{\hskip-.0truecm\rule[-1mm]{4mm}{4mm}\hskip.2truecm}}\hskip-.3truecm}
 \def\cor{\color{red}}
 \def\cob{\color{black}}

\begin{document}
\def\dbar{\overline\partial}
\def\comm{L}
\def\intint{\int\!\!\!\!\int}
\def\intinttext{\int\!\!\!\int}
\def\intintint{\int\!\!\!\!\int\!\!\!\!\int}
\def\intintintint{\int\!\!\!\!\int\!\!\!\!\int\!\!\!\!\int}
\def\ques{{\cor \underline{??????}\cob}}
\def\nto#1{{\coC \footnote{\em \coC #1}}}
\def\fractext#1#2{{#1}/{#2}}
\def\fracsm#1#2{{\textstyle{\frac{#1}{#2}}}}   
\def\baru{U}
\def\nnonumber{}
\def\palpha{p_{\alpha}}
\def\valpha{v_{\alpha}}
\def\Ualpha{U_{\alpha}}
\def\qalpha{q_{\alpha}}
\def\walpha{w_{\alpha}}
\def\falpha{f_{\alpha}}
\def\dalpha{d_{\alpha}}
\def\galpha{g_{\alpha}}
\def\halpha{h_{\alpha}}
\def\plusdelta{+\delta}
\def\psialpha{\psi_{\alpha}}
\def\psibeta{\psi_{\beta}}
\def\betaalpha{\beta_{\alpha}}
\def\gammaalpha{\gamma_{\alpha}}
\def\Talpha{T}
\def\TTalpha{T_{\alpha}}
\def\TTalphak{T_{\alpha,k}}
\def\falphak{f^{k}_{\alpha}}

\def\vbar{\overline v}
\def\ubar{\overline u}
\def\pbar{\overline p}
\def\qbar{\overline q}
\def\wbar{\overline w}
\def\zbar{\overline z}

\newcommand{\bv}{{u}}
\newcommand{\R}{\mathbb{R}}
\newcommand{\N}{\mathbb{N}}
\newcommand{\Z}{\mathbb{Z}}
\newcommand{\Br}{B_r(x_0)}
\newcommand{\Qr}{Q_r(x_0,t_0)}
\newcommand{\hilight}[1]{\colorbox{yellow}{#1}}
%
\def\pdot{}
\def\Pdot{}
\def\tdot{\fbox{\fbox{\bf\tiny I'm here; \today \ \currenttime}}}
 \baselineskip=15pt
\def\nts#1{{\color{red}\hbox{\bf ~#1~}}} 
\def\mbar{{\overline M}}
\def\tilde{\widetilde}
\newtheorem{Theorem}{Theorem}[section]
\newtheorem{Corollary}[Theorem]{Corollary}
\newtheorem{Proposition}[Theorem]{Proposition}
\newtheorem{Lemma}[Theorem]{Lemma}
\newtheorem{Remark}[Theorem]{Remark}
\newtheorem{definition}{Definition}[section]
\def\theequation{\thesection.\arabic{equation}}
\def\endproof{\hfill$\Box$\\}
\def\square{\hfill$\Box$\\}
\def\comma{ {\rm ,\qquad{}} }            
\def\commaone{ {\rm ,\qquad{}} }         
\def\dist{\mathop{\rm dist}\nolimits}    
\def\sgn{\mathop{\rm sgn\,}\nolimits}    
\def\Tr{\mathop{\rm Tr}\nolimits}    
\def\div{\mathop{\rm div}\nolimits}    
\def\curl{\mathop{\rm curl}\nolimits}    
\def\supp{\mathop{\rm supp}\nolimits}    
\def\divtwo{\mathop{{\rm div}_2\,}\nolimits}    
\def\re{\mathop{\rm {\mathbb R}e}\nolimits}    
\def\indeq{\qquad{}\!\!\!\!}                     
\def\period{.}                           
\def\semicolon{\,;}                      
\def\TT{S}

\def\red#1{\textcolor{red}{#1}}
\def\mgt#1{\textcolor{magenta}{#1}}
\def\Lpnorm#1{\Vert #1\Vert_{L^p}}


\title{On the local existence for an active scalar equation \\ in critical regularity setting}

\author{Walter Rusin}
\address{Department of Mathematics, Oklahoma State University, Stillwater, OK 74078}

\author{Fei Wang}
\address{Department of Mathematics, University of Southern California, Los Angeles, CA 90089}

\begin{abstract}
	In this note, we address the local well-posedness for the active scalar equation $\partial_t \theta + u\cdot \nabla \theta =0$, where $u = - \nabla^\perp(-\Delta)^{-1+\beta/2}\theta$. The local existence of solutions in the Sobolev class $H^{1+\beta+\epsilon}$, where $\epsilon>0$ and $\beta \in (1,2)$, has been recently addressed in \cite{HKZ}. The critical case $\epsilon =0$ has remained open. Using a different technique, we prove the local well-posedness in the Besov space $B^{1+\beta}_{2,1}$, where $\beta \in (1,2)$. The proof is based on log-Lipschitz estimates for the transport equation. 
\end{abstract}

\maketitle

\section{Introduction}\label{sec:intro}
\setcounter{equation}{0}

We address the problem of local existence of solutions for the family of active scalar equations 
\begin{align}\label{eq1}
	\partial_t \theta + u \cdot \nabla \theta =0
\end{align}
where the drift velocity $u$ is obtained from the scalar $\theta$ through a singular integral operator $u=M(\theta)$. Equations of that type have been recently the focus of interest, motivated by the ubiquity of physical contexts in which such equations arise. In \cite{CCCGW} (see also \cite{CW}, \cite{CCW} and \cite{HKZ}) the authors introduce a family of equations of the type \eqref{eq1} with
\begin{align}\label{eq2}
	 u = \nabla^\perp \psi \nonumber \\
	 \Delta \psi = \Lambda^\beta \theta 
\end{align}
where $\Lambda = (-\Delta)^{1/2}$ is a Calder\'on-Zygmund operator defined as a Fourier multiplier with the symbol $|\xi|$ and $\nabla^\perp \psi = (-\partial_2 \psi, \partial_1 \psi)$. Equation \eqref{eq1} together with \eqref{eq2} reduces to the Euler equation \cite{BM} in vorticity formulation when $\beta =0$, whereas for $\beta =1$ we obtain the non-dissipative surface quasigeostrophic equation (SQG) \cite{CMT}. For $\beta =2$, the equation bears resemblance with the non-dissipative magneto-geostrophic equation \cite{FRV}. In this note we are concerned with the local existence of solutions for \eqref{eq1}--\eqref{eq2} with $1 < \beta < 2$. In particular, note that the drift velocity is more singular than in the case of SQG. 

In \cite{CCCGW}, the authors establish the local wellposedness of \eqref{eq1}--\eqref{eq2} in Sobolev spaces $H^m$ with an integer $m \geq 4$. The proof is based on the use of the anti-symmetry of the nonlinearity and a commutator estimate in Sobolev spaces with low (negative) regularity. In \cite{HKZ}, the authors note that from the formulation of \eqref{eq1} it seems that the optimal space for existence of solutions should be the space which yields a uniformly Lipschitz velocity field $u$. From Sobolev embedding we then obtain that a sufficient condition for $u$ to be in the desired class is $\theta \in H^{1+\beta+\epsilon}$, where $\epsilon >0$ is arbitrary. Moreover, since the considered equation is not equipped with any regularizing effects, the regularity of initial data should be preserved at least locally in time thus it appears that for local well-posedness we only require that $\theta_0 \in H^{1+\beta+\epsilon}$. The proof is based on a new commutator estimate, improving the estimates presented in~\cite{CCCGW}. The local existence of solutions in the critical space $H^{1+\beta}$ has remained an open problem. 

In this note we address the existence of local solutions in the Besov spaces with critical regularity, namely $B^{1+\beta}_{2,1}$. Our proof is based on the log-Lipschitz estimate for the transport equation rather than the use of usual commutator estimates.

\section{Preliminaries}

For the sake of clarity of notation let us briefly recall the diadic decomposition of the frequency space $\mathbb{R}^n$. 
Let $\phi, \psi  \in \mathcal{S}(\mathbb{R}^n)$  be two radial functions with supports in Fourier space
\begin{align*}
	 supp\; \widehat{\phi} \subset \{ \xi: |\xi| \leq 4/3\}, \;\;\;\;\; supp\; \widehat{\psi} \subset \{\xi: 3/4 < |\xi| < 8/3 \}, \\
\end{align*}
and with the property that
\begin{align*}
	 \widehat{\phi(\xi)} + \sum_{j \geq 0}\widehat{\psi} (2^{-j} \xi) = 1
\end{align*}
 for all $\xi \in \mathbb{R}^n$.
The existence of such functions is a classical result and we refer the reader to \cite{BCD} (or other similar sources) for more details. 

In order to isolate the interactions of different Fourier modes, we define the Littlewood-Paley projections $\Delta_j$ for $j \in \mathbb{Z}$ as follows
\begin{align*}
	\Delta_j f(x) = \begin{cases}
		0 & \text{ if $j \leq -2$} \\
		\int_{\mathbb{R}^n} \phi(y)f(x-y)\;dy & \text{ if $j = -1$} \\
		2^{jn}\int_{\mathbb{R}^n} \psi(2^jy)f(x-y)\;dy & \text{ if $j \geq 0$} \end{cases}
\end{align*}
For $j \in \mathbb{Z}$ the operator $S_j$ is the sum of $\Delta_k$ with $k \leq j-1$, that is 
\begin{equation*}
	S_j f(x) = \Delta_{-1}f(x) + \Delta_0 f(x) + \ldots + \Delta_{j-1}f(x) = \int_{\mathbb{R}^n} \phi(2^jy)f(x-y)\;dy. 
\end{equation*}
It is by now a classical result that for any tempered distribution $f$ we have $S_jf \to f$ in the distributional sense as $j \to \infty$. 

For any $s \in \mathbb{R}$ and $p,q \in [1,\infty]$, the Besov space $B^s_{p,q}$ consists of all tempered distributions $f \in \mathcal{S}'(\mathbb{R}^n)$ such that the sequence $\{ 2^{js}\|\Delta_j f\|_{L^p}\}$ is summable in the sense of $\ell^q(\mathbb{Z})$ with the obvious modification in the case $\ell^\infty(\Z)$. 



\section{The main result and the commutator estimate}

Let $\beta \in (1,2)$. we consider the problem \eqref{eq1}--\eqref{eq2} with initial data
\begin{align}
	\theta(x,0) = \theta_0(x).
\end{align}
The main result of this note is the following
\begin{Theorem}\label{thm1}
	Let $\theta_0 \in B^{1+\beta}_{2,1}(\R^2)$. There exists $T = T(\|\theta_0\|_{B^{1+\beta}_{2,1}})>0$ and a unique solution $\theta \in C([0,T], B^{1+\beta}_{2,1}(\R^2))$. 
\end{Theorem}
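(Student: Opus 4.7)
The plan is to establish the theorem via a standard a priori estimate plus approximation plus compactness scheme. The critical observation underlying the choice of $B^{1+\beta}_{2,1}$ is that the map $\theta \mapsto u = \nabla^\perp \Lambda^{\beta-2}\theta$ is a Fourier multiplier of order $\beta-1$, so that $\theta \in B^{1+\beta}_{2,1}(\R^2)$ yields $u \in B^{2}_{2,1}(\R^2)$. Because $B^{n/p+1}_{p,1}$ is the critical Besov index for embedding into the Lipschitz class, we obtain
\begin{equation*}
\|\nabla u\|_{L^\infty} \lesssim \|u\|_{B^2_{2,1}} \lesssim \|\theta\|_{B^{1+\beta}_{2,1}}.
\end{equation*}
This last embedding fails in the Sobolev scale $H^{1+\beta} = B^{1+\beta}_{2,2}$; only the strict $\ell^1$ summability of dyadic blocks recovers a Lipschitz drift at this regularity, which accounts for the gap between our result and \cite{HKZ}.

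The a priori estimate proceeds by Littlewood--Paley technology. Regularize the initial datum as $\theta_0^n = S_n \theta_0$ and appeal to \cite{CCCGW} or \cite{HKZ} to produce smooth solutions $\theta^n$ on some common interval. Applying $\Delta_j$ to \eqref{eq1}, using $\div u^n = 0$, and integrating against $\Delta_j\theta^n$ yields
\begin{equation*}
\frac{d}{dt}\|\Delta_j \theta^n\|_{L^2} \leq \|[\Delta_j,u^n\cdot\nabla]\theta^n\|_{L^2}.
\end{equation*}
The heart of the argument is then the critical commutator bound
\begin{equation*}
\sum_{j} 2^{j(1+\beta)}\|[\Delta_j, u^n\cdot\nabla]\theta^n\|_{L^2} \lesssim \|\nabla u^n\|_{L^\infty}\,\|\theta^n\|_{B^{1+\beta}_{2,1}},
\end{equation*}
proved via Bony's paraproduct decomposition with careful tracking of the borderline terms (the \textit{log-Lipschitz} terms referenced in the abstract), which can be controlled precisely because the $\ell^1$ structure of $B^{s}_{2,1}$ absorbs the logarithmic factors that would otherwise obstruct the summation. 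Combined with the Lipschitz bound on $u^n$, this produces the Riccati inequality
\begin{equation*}
\frac{d}{dt}\|\theta^n\|_{B^{1+\beta}_{2,1}} \lesssim \|\theta^n\|_{B^{1+\beta}_{2,1}}^{2},
\end{equation*}
yielding a uniform lifespan $T \gtrsim 1/\|\theta_0\|_{B^{1+\beta}_{2,1}}$ and a uniform bound on $\theta^n$ in $L^\infty_T B^{1+\beta}_{2,1}$.

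Having uniform bounds, I would pass to the limit along a subsequence via Aubin--Lions (bounding $\partial_t\theta^n$ in a weaker norm directly from the equation), verify weak-$\ast$ convergence in the critical space, and recover strong continuity in time with values in $B^{1+\beta}_{2,1}$ by a Bona--Smith-type regularization argument. Uniqueness is easier: the difference $\theta_1 - \theta_2$ of two solutions satisfies a transport equation with Lipschitz drift, so a Grönwall estimate at the $L^2$ level closes. The principal obstacle I foresee is the critical commutator estimate itself: at the borderline level each piece of the Bony decomposition must be handled individually, and the summation in $j$ must be carried out without losing a logarithm --- this is exactly where $B^{1+\beta}_{2,1}$ is the right space and $H^{1+\beta}$ appears not to be.
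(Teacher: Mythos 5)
Your skeleton (Littlewood--Paley energy estimate, commutator bound, Riccati inequality, approximation and compactness) matches the paper's, and the embedding $u\in B^2_{2,1}(\R^2)\hookrightarrow \mathrm{Lip}$ is correctly identified. The gap is the estimate you place at the heart of the argument, $\sum_j 2^{j(1+\beta)}\Vert[\Delta_j,u\cdot\nabla]\theta\Vert_{L^2}\lesssim\Vert\nabla u\Vert_{L^\infty}\Vert\theta\Vert_{B^{1+\beta}_{2,1}}$. That form of the commutator estimate is the one valid for regularity indices of modulus less than one (or up to $1+n/p$); for $\sigma=1+\beta\in(2,3)$ the Bony decomposition produces, besides the genuinely commutator-like piece $[\Delta_j,T_u\cdot\nabla]\theta$ controlled by $\Vert\nabla u\Vert_{L^\infty}$, the paraproduct $\Delta_j T_{\nabla\theta}u$ and the remainder, whose natural bound involves $\Vert\nabla\theta\Vert_{L^\infty}\Vert\nabla u\Vert_{B^{\beta}_{2,1}}$. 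Here the constitutive law bites: $u=\nabla^\perp\Lambda^{\beta-2}\theta$ is $\beta-1>0$ derivatives \emph{rougher} than $\theta$, so $\Vert\Delta_j u\Vert_{L^2}\sim 2^{j(\beta-1)}\Vert\Delta_j\theta\Vert_{L^2}$ and $\Vert\nabla u\Vert_{B^{\beta}_{2,1}}\sim\Vert\theta\Vert_{B^{2\beta}_{2,1}}$ with $2\beta>1+\beta$. However you bound $\Delta_j(S_{j-1}\nabla\theta\,\Delta_j u)$ --- by $\Vert\nabla\theta\Vert_{L^\infty}\Vert\Delta_ju\Vert_{L^2}$ or by $\Vert S_{j-1}\nabla\theta\Vert_{L^2}\Vert\Delta_ju\Vert_{L^\infty}$ --- the weight $2^{j(1+\beta)}$ leaves a divergent factor $2^{j(\beta-1)}$ or $2^{j\beta}$ in the sum over $j$. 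So ``careful tracking of the borderline terms'' is exactly the step that needs an idea, and $\Vert\nabla u\Vert_{L^\infty}$ is not the right currency for it. The paper avoids forming $[\Delta_j,u\cdot\nabla]$ altogether: it paralinearizes the transport term as $S_{j-1}u\cdot\nabla\Delta_j\theta$ and invokes Vishik's estimate (Lemma~\ref{lem1}) for $S_{j-1}u\cdot\nabla\Delta_j\theta-\Delta_j(u\cdot\nabla\theta)$, whose right-hand side involves only the log-Lipschitz norm of $u$ and the blocks $\Vert\Delta_{j'}\theta\Vert_{L^2}$ with $j'\ge j-M$ and weights that sum under the $\ell^1$ Besov norm; $\Vert u\Vert_{LL}$ is then closed through the embedding $B^{2}_{2,\infty}\hookrightarrow LL$ (Lemma~\ref{lem2}) and Bernstein. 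If you insist on the plain commutator, you must instead exploit the antisymmetry of $\nabla^\perp\Lambda^{\beta-2}$ when the bad paraproduct is tested against $\Delta_j\theta$, which is the route of \cite{CCCGW} and \cite{HKZ}.

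A second, smaller gap: uniqueness does not close ``at the $L^2$ level.'' The difference $\delta=\theta_1-\theta_2$ obeys $\partial_t\delta+u_1\cdot\nabla\delta=-(u_1-u_2)\cdot\nabla\theta_2$ with $u_1-u_2=\nabla^\perp\Lambda^{\beta-2}\delta$, so the forcing term costs $\Vert\Lambda^{\beta-1}\delta\Vert_{L^2}\Vert\nabla\theta_2\Vert_{L^\infty}$, which is not $\Vert\delta\Vert_{L^2}$ times a bounded quantity: the same $\beta-1$ derivative loss reappears. One has to run the uniqueness argument in a space of lower (e.g.\ negative) order, or again through the antisymmetric commutator structure, as in \cite{CCCGW}.
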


In order to prove our main result, we first recall a commutator estimate and an embedding lemma (cf.~\cite{Vi}).

Let $\alpha\in[0,1]$. As in~\cite{Vi}, we introduce the space $LL_\alpha$ of bounded functions on $\mathbb{R}^n$ such that 
  \begin{equation}
  \label{}
  \Vert f\Vert_{LL_{\alpha}}
  =
  \Vert f\Vert_{L^\infty}
  +
  \sup_{0<|x-y|\le1}\frac{f(x)-f(y)}{|x-y|(1-\log_2|x-y|)^\alpha}
  <\infty.
  \end{equation}
We set $LL=LL_0$.
\begin{Lemma}\label{lem1}
Let $j \geq -1$ be an integer and $p,q \in [1,\infty]$. Assume that $f\in L^p(\mathbb{R}^n)$ is a scalar function and  
$u\in LL_{1/q'}(\mathbb{R}^n)$ is a vector valued function, where $q'$ is the conjugate of $q$. We have
\begin{align*}
 \Vert S_{j-1}u \cdot \nabla \Delta_j f-\Delta_j(u \cdot \nabla f)\Vert_{L^p}
  \le
  C2^j \Vert u\Vert_{LL_{1/q'}}\sum_{j'\ge j-M}2^{-j'}(j'+1)^{1/q'}\Vert \Delta_{j'}f\Vert_{L^p}\,ds,
\end{align*}
for some universal constant $M>0$.
\end{Lemma}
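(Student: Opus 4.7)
The plan is to apply Bony's paraproduct decomposition $u\cdot\nabla f = T_u\nabla f + T_{\nabla f} u + R(u,\nabla f)$ to split the product inside $\Delta_j(u\cdot\nabla f)$ and then to organize the resulting pieces according to the frequency interactions. By spectral localization, the low-high paraproduct $\Delta_j T_u\nabla f$ receives contributions only from $|k-j|\leq C$, so after subtracting $S_{j-1}u\cdot\nabla\Delta_j f$ one is left, up to finitely many neighboring pieces, with the diagonal commutator $[\Delta_j,\,S_{j-1}u]\,\Delta_j\nabla f$. The high-low paraproduct $\Delta_j T_{\nabla f}u$ is likewise supported at $|k-j|\leq C$, while the remainder $\Delta_j R(u,\nabla f)$ produces the high-high tail $\sum_{k\geq j-M_0}\Delta_j(\Delta_k u\cdot\tilde\Delta_k\nabla f)$. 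Each of these pieces will be estimated separately.

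For the diagonal commutator I use the standard kernel representation
\[
[\Delta_j,S_{j-1}u]\,g(x)=\int h_j(y)\bigl(S_{j-1}u(x-y)-S_{j-1}u(x)\bigr)g(x-y)\,dy,
\]
with $h_j(y)=2^{jn}h(2^j y)$. The log-Lipschitz hypothesis passes to $S_{j-1}u$ uniformly in $j$ and gives the pointwise estimate $|S_{j-1}u(x)-S_{j-1}u(x-y)|\leq C\|u\|_{LL_{1/q'}}|y|(1+|\log_2|y||)^{1/q'}$. Integrating this against $|h_j(y)|$, whose mass concentrates at $|y|\sim 2^{-j}$, yields a factor $2^{-j}(j+1)^{1/q'}$, and Bernstein's inequality converts $\|\Delta_j\nabla f\|_{L^p}$ to $2^j\|\Delta_j f\|_{L^p}$. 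The outcome matches exactly the $j'=j$ term of the sum in the statement.

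For the high-high remainder, the critical device is to extract a gradient by the divergence-free structure of $u$, which holds in our setting since $u=\nabla^\perp\psi$: one rewrites $\Delta_k u\cdot\tilde\Delta_k\nabla f=\div\bigl(\Delta_k u\,\tilde\Delta_k f\bigr)$. Then $\Delta_j\div$ contributes a factor $2^j$, while the pointwise bound $\|\Delta_k u\|_{L^\infty}\lesssim\|u\|_{LL_{1/q'}}2^{-k}(k+1)^{1/q'}$, obtained by exploiting the vanishing mean of the Littlewood--Paley kernel to center $u(x-y)-u(x)$ and then applying the log-Lipschitz increment, produces the summand $C\,2^j\|u\|_{LL_{1/q'}}2^{-k}(k+1)^{1/q'}\|\tilde\Delta_k f\|_{L^p}$ for each $k\geq j-M_0$. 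The high-low paraproduct is handled with the same pointwise bound on $\Delta_k u$; being localized at $|k-j|\leq C$, it is absorbed into the $j'\approx j$ contribution. Summing the three groups gives the stated inequality.

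The main technical obstacle is the high-high regime $k>j$: a direct $L^\infty\!\cdot\!L^p$ estimate leaves a summand of size $\|u\|_{LL_{1/q'}}(k+1)^{1/q'}\|\tilde\Delta_k f\|_{L^p}$, lacking the $2^{j-k}$ decay required to match the stated bound and to make the tail summable against $f\in B^{1+\beta}_{2,1}$. Extracting the gradient via the divergence-free structure (or, equivalently, performing an integration by parts that redistributes a derivative from $\nabla f$ onto $u$) is what produces the missing $2^{j-k}$ and closes the argument.
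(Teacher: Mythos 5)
The paper offers no proof of this lemma (it is recalled from Vishik \cite{Vi}), so your argument has to stand on its own. Its overall architecture is the standard and correct one: the kernel representation of the diagonal commutator combined with the log-Lipschitz modulus of continuity of $S_{j-1}u$ (yielding the factor $2^{-j}(j+1)^{1/q'}$), the pointwise bound $\Vert \Delta_k u\Vert_{L^\infty}\le C 2^{-k}(k+1)^{1/q'}\Vert u\Vert_{LL_{1/q'}}$ obtained from the vanishing mean of the Littlewood--Paley kernel, and the use of $\div u=0$ to extract the factor $2^{j-k}$ in the high-high remainder are all sound, and you are right that the divergence-free condition, though absent from the statement, is indispensable and holds in the paper's application.

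The gap is your treatment of the high-low paraproduct. In $\Delta_j T_{\nabla f}u=\sum_{|k-j|\le C}\Delta_j\bigl(S_{k-1}\nabla f\,\Delta_k u\bigr)$ the spectral localization pins down $k$, the frequency of $u$, near $j$; it does not restrict the frequencies of $f$, which enter through $S_{k-1}\nabla f$ and hence range over all $j'\le j+O(1)$. The natural bound $\Vert S_{k-1}\nabla f\Vert_{L^p}\Vert\Delta_k u\Vert_{L^\infty}\le C(j+1)^{1/q'}\Vert u\Vert_{LL_{1/q'}}\sum_{j'\le j+2}2^{j'-j}\Vert\Delta_{j'}f\Vert_{L^p}$ therefore contains frequencies $j'<j-M$ of $f$ that simply do not occur on the right-hand side of the claimed inequality, so ``absorbed into the $j'\approx j$ contribution'' is not justified. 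This is not merely a presentational defect: if $f$ has spectrum in $\{|\xi|\le 1\}$ and $u$ is divergence-free with spectrum in the annulus $\{|\xi|\sim 2^j\}$, then $\Delta_j\nabla f=0$, the left side equals $\Vert\Delta_j(u\cdot\nabla f)\Vert_{L^p}$, which is generically nonzero, while the right side vanishes once $j>M+1$. Hence the inequality, read literally for independent $u$ and $f$, cannot be proved, and your argument breaks exactly at this term. To close it one must either add to the right-hand side a term of the type $\sum_{|k-j|\le C}\Vert\Delta_k u\Vert_{L^\infty}\Vert S_{k}\nabla f\Vert_{L^p}$ (as in the usual statements of this commutator lemma), or invoke the constitutive law $u=\nabla^\perp(-\Delta)^{-1+\beta/2}\theta$, which gives $\Vert\Delta_k u\Vert_{L^\infty}\le C2^{k\beta}\Vert\Delta_k\theta\Vert_{L^2}$ and re-attaches this contribution to the frequencies $k\approx j$ of $f=\theta$. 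Either way the high-low term needs an explicit argument rather than a one-line dismissal, and you should flag that the statement as written requires this extra input.
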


We also need the following embedding lemma for the proof.
\begin{Lemma}\label{lem2}
	Let $p,q \in[1,\infty]$ and $sp= n$ where $n$ is the dimension. There exists a constant $C >0 $ such that 
	\begin{align}
		\|f\|_{LL_{1/q'}(\R^n)} \leq C \|f\|_{B^{1+s}_{p,q}(\R^n)}.
	\end{align}
	where $q'$ is the conjugate of $q$.
\end{Lemma}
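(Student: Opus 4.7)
The natural strategy is a Littlewood--Paley split of $f$ into low- and high-frequency pieces with a cutoff index chosen in terms of the distance $|x-y|$. Bernstein's inequality, together with $sp=n$, is what converts the $B^{1+s}_{p,q}$ norm into pointwise information; H\"older's inequality in the index $j$ (with exponents $q$ and $q'$) is what produces the factor $(1-\log_2|x-y|)^{1/q'}$.

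\textbf{The $L^\infty$ part.} First I would bound $\|f\|_{L^\infty}\leq\sum_j\|\Delta_j f\|_{L^\infty}$. By Bernstein and the hypothesis $s=n/p$,
\[
\|\Delta_j f\|_{L^\infty}\leq C\,2^{jn/p}\|\Delta_j f\|_{L^p}=C\,2^{js}\|\Delta_j f\|_{L^p}=C\,2^{-j}\bigl(2^{j(1+s)}\|\Delta_j f\|_{L^p}\bigr).
\]
Applying H\"older in $j$ with exponents $q',q$ gives $\|f\|_{L^\infty}\leq C\bigl(\sum_j 2^{-jq'}\bigr)^{1/q'}\|f\|_{B^{1+s}_{p,q}}\leq C\|f\|_{B^{1+s}_{p,q}}$ (with the usual modification when $q=1$).

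\textbf{The log-Lipschitz part.} Fix $x,y$ with $0<|x-y|\leq 1$ and pick the integer $N\geq 0$ with $2^{-N-1}<|x-y|\leq 2^{-N}$, so that $N\sim 1-\log_2|x-y|$. Writing $f=\sum_j\Delta_j f$, I split the sum at $j=N$. For the low frequencies, the mean value theorem together with Bernstein yields
\[
|\Delta_j f(x)-\Delta_j f(y)|\leq|x-y|\,\|\nabla\Delta_j f\|_{L^\infty}\leq C|x-y|\,2^{j(1+s)}\|\Delta_j f\|_{L^p}.
\]
For the high frequencies, the triangle inequality plus Bernstein gives
\[
|\Delta_j f(x)-\Delta_j f(y)|\leq 2\|\Delta_j f\|_{L^\infty}\leq C\,2^{js}\|\Delta_j f\|_{L^p}=C\,2^{-j}\bigl(2^{j(1+s)}\|\Delta_j f\|_{L^p}\bigr).
\]
Summing and applying H\"older in $j$ on each piece with the exponents $(q,q')$, the low-frequency sum contributes at most $C|x-y|\,(N+2)^{1/q'}\|f\|_{B^{1+s}_{p,q}}$ (from $\sum_{j\leq N}1^{q'}$), while the high-frequency sum contributes at most $C\bigl(\sum_{j>N}2^{-jq'}\bigr)^{1/q'}\|f\|_{B^{1+s}_{p,q}}\leq C\,2^{-N}\|f\|_{B^{1+s}_{p,q}}\leq C|x-y|\,\|f\|_{B^{1+s}_{p,q}}$. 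Combining,
\[
|f(x)-f(y)|\leq C|x-y|\bigl(1-\log_2|x-y|\bigr)^{1/q'}\|f\|_{B^{1+s}_{p,q}},
\]
which is exactly the log-Lipschitz seminorm bound with exponent $1/q'$.

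\textbf{Main obstacle.} The proof is essentially bookkeeping; the only real subtlety is matching the H\"older exponent $q'$ on the index variable to the logarithmic exponent $1/q'$ in the definition of $LL_{1/q'}$, and handling the endpoint cases $q=1$ (where $1/q'=0$ and one recovers the usual Lipschitz embedding $B^{1+s}_{p,1}\hookrightarrow\mathrm{Lip}$) and $q=\infty$ (where $1/q'=1$ and one recovers the classical log-Lipschitz embedding $B^{1+s}_{p,\infty}\hookrightarrow LL$) separately but by the same scheme.
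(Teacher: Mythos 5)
Your proof is correct, and since the paper states this lemma without proof (deferring to the reference \cite{Vi}), there is no competing argument in the text to compare against; your Littlewood--Paley split at the scale $2^{-N}\sim|x-y|$, with Bernstein converting $L^p$ to $L^\infty$ via $sp=n$ and H\"older in $j$ producing the factor $(N+2)^{1/q'}\sim(1-\log_2|x-y|)^{1/q'}$, is exactly the standard route taken in that reference. The endpoint bookkeeping for $q=1$ and $q=\infty$ is handled correctly as well.
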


\begin{proof}[Proof of Theorem \ref{thm1}]
Here, we restrict the proof to the \emph{a priori} estimate. The actual proof of existence is a result of a standard approximation procedure. 
	Let $j \geq -1$. We apply the Littlewood-Paley projection $\Delta_j$ to both sides of \eqref{eq1} and get
	\begin{align}\label{eq4.3}
		\partial_t \Delta_j \theta+ S_{j-1}u \cdot \nabla \Delta_j \theta 
		=  
		S_{j-1}u \cdot \nabla \Delta_j \theta-\Delta_j(u \cdot \nabla\theta).
	\end{align}
Multiplying the above equation by $\Delta_j \theta$ and integrating it, we obtain 
  \begin{align}
  \label{}
  \frac{1}{2}\frac{d}{dt}\Vert \Delta_j\theta\Vert_{L^2}^2
  +
  \int S_{j-1}u \cdot \nabla \Delta_j \theta\Delta_j \theta\,dx
  =
  \int (S_{j-1}u \cdot \nabla \Delta_j \theta-\Delta_j(u \cdot \nabla\theta))\Delta_j \theta\,dx.
  \end{align}
We apply H\"older's inequality to the term on the right side of the above equation in order to deduce
  \begin{align}
  \label{}
  \frac{d}{dt}\Vert \Delta_j\theta\Vert_{L^2}
  \le
  \Vert S_{j-1}u \cdot \nabla \Delta_j \theta-\Delta_j(u \cdot \nabla\theta)\Vert_{L^2},
  \end{align}
where we used the fact 
  \begin{equation}
  \label{}
  \int S_{j-1}u \cdot \nabla \Delta_j \theta\Delta_j \theta\,dx=0.
  \end{equation}
due to the divergence-free condition of $u$.
Integrating over time and using Lemma~\ref{lem1} gives
  \begin{align}
  \label{}
  \Vert \Delta_j\theta\Vert_{L^2}
  &\le
  \Vert \Delta_j\theta_0\Vert_{L^2}
  +
  \int_0^t  \Vert S_{j-1}u \cdot \nabla \Delta_j \theta-\Delta_j(u \cdot \nabla\theta)\Vert_{L^2} \,ds
  \nonumber\\&
  \le
  \Vert \Delta_j\theta_0\Vert_{L^2}
  +
  C\int_0^t 2^j \Vert u\Vert_{LL}\sum_{j'\ge j-M}2^{-j'}\Vert \Delta_{j'}\theta\Vert_{L^2}\,ds,
  \end{align}
from where by the discrete Young inequality we further get
  \begin{align}
  \label{}
  \Vert\theta \Vert_{B^{1+\beta}_{2,1}}
  &\le
  \Vert\theta_0 \Vert_{B^{1+\beta}_{2,1}}
  +
  C\int_0^t \Vert u\Vert_{LL}\sum_{j}\sum_{j'\ge j-M}2^{(2+\beta)(j-j')}2^{j'(1+\beta)}\Vert \Delta_{j'}\theta\Vert_{L^2}\,ds,
  \nonumber\\&
  \le
  \Vert\theta_0 \Vert_{B^{1+\beta}_{2,1}}
  +
  C\int_0^t \Vert u\Vert_{LL}\Vert\theta \Vert_{B^{1+\beta}_{2,1}}\,ds.
  \end{align}
	Applying Lemma \ref{lem2} with $q'=\infty$ and $p=2$, we obtain
	\begin{align*}
		\Vert u\Vert_{LL}
		\le
		C\Vert u\Vert_{B^{2}_{2,\infty}}
		\le
		C\Vert u\Vert_{B^{2}_{2,1}}.
	\end{align*}
By Bernstein's inequality, we have
  \begin{align}
  \label{}
  \Vert u\Vert_{B^{2}_{2,1}}
  &=
  \sum_{j=-1}^\infty 2^{2j}\Vert \Delta_ju\Vert_{L^2}
  =
  \sum_{j=-1}^\infty 2^{2j}\Vert \Delta_j\nabla^{\perp}(-\Delta)^{-1+\beta/2}\theta\Vert_{L^2}
  \nonumber\\&
  \le
  C\sum_{j=-1}^\infty 2^{j(1+\beta)}\Vert \Delta_j\theta\Vert_{L^2}=C\Vert\theta \Vert_{B^{1+\beta}_{2,1}}.
  \end{align}
Note that this yields
	\begin{align*}
		\|\theta(\cdot,t)\|_{B^{1+\beta}_{2,1}} \leq \|\theta_0\|_{B^{1+\beta}_{2,1}} 
		+ 
		C \int_0^t\| \theta \|_{B^{1+\beta}_{2,1}}^2 \;d\tau.
	\end{align*}
By Gr\"onwall's inequality we obtain
\begin{align}\label{estt1}
	\|\theta(\cdot,t)\|_{B^{1+\beta}_{2,1}} \leq C \| \theta_0\|_{B^{1+\beta}_{2,1}}\exp\left(C\int_0^t \|\theta(\cdot,\tau)\|_{B^{1+\beta}_{2,1}}\;d\tau\right)
\end{align}
Denoting by $V(t) = \int_0^t \|\theta(\cdot,\tau)\|_{B^{1+\beta}_{2,1}}\;d\tau$, we find
\begin{align}
	V(t) = C \int_0^t \|\theta(\cdot,\tau)\|_{B^{1+\beta}_{2,1}}\;d\tau \leq Ct \|\theta_0\|_{B^{1+\beta}_{2,1}}\exp(CV(t)).
\end{align}
Since $V(t)$ depends continuously on $t$ and $V(0)=0$, there exist $C_0,\eta >0$, such that 
\begin{align}\label{estt2}
	Ct\|\theta_0\|_{B^{1+\beta}_{2,1}} \leq \eta, \text{ then } CV(t) \leq C_0.
\end{align}
This, combined with 
\eqref{estt1} gives
\begin{align}
	\|\theta(t)\|_{B^{1+\beta}_{2,1}} \leq C\|\theta_0\|_{B^{1+\beta}_{2,1}}
\end{align}
for $t \in [0,T]$ for some $T>0$ (see \cite{HK}). We point out that \eqref{estt2} implies that the time of existence is bounded from below 
\begin{align}
	T > (C \|\theta_0\|_{B^{1+\beta}_{2,1}})^{-1}. 
\end{align}
This completes the proof. 
\end{proof}


\noindent \footnotesize{\bf Acknowledgments.} 
W.R. was supported in part by the NSF grant DMS-1311964 while F.W. was supported in part by the NSF grant DMS-1311943.
\normalfont
\normalsize

\end{document}